\documentclass[a4paper,12pt]{article}
\usepackage[utf8]{inputenc}
\usepackage{amsmath}
\usepackage{amssymb}
\usepackage{amsthm}

\usepackage{cite}

\usepackage{booktabs}

\usepackage{esint}

\usepackage[scr]{rsfso}
\usepackage{bbm}

\usepackage{color}

\makeatletter
\let\my@saved@original@eqref\eqref 
\renewcommand*{\eqref}[1]{
  \begingroup
    \let\normalfont\relax
    \my@saved@original@eqref{#1}
  \endgroup
}
\makeatother
\makeatletter
\renewcommand*\env@matrix[1][r]{\hskip -\arraycolsep
  \let\@ifnextchar\new@ifnextchar
  \array{*\c@MaxMatrixCols #1}}
\makeatother

\newcommand{\pt}{\partial}
\DeclareMathOperator{\Div}{{div}}

\DeclareMathOperator{\Curl}{{curl}}

\newcommand{\norm}[2]{\|{#1}\|_{#2}}

\newcommand{\N}{\mathbb{N}}

\newcommand{\R}{\mathbb{R}}

\newcommand{\PS}{\mathcal{P}}
\newcommand{\QS}{\mathcal{Q}}

\newcommand{\lrarrow}{\quad\Leftrightarrow\quad}

\newcommand{\qmbox}[1]{\quad\mbox{#1}\quad}

\newcounter{tmp}
\newcommand{\makeballnumber}[1]{\setcounter{tmp}{\theenumi}%
\setcounter{enumi}{#1}%
\leavevmode \csname beamer@@tmpl@enumerate item\endcsname%
\setcounter{enumi}{\thetmp}}
\newcommand{\makeball}{\leavevmode \csname beamer@@tmpl@itemize item\endcsname}

\definecolor{seb}{rgb}{0.9,0,0}

\newcommand{\vecsymb}[1]{\boldsymbol{#1}}

\newcommand{\vh}{\vecsymb{h}}

\newcommand{\vn}{\vecsymb{n}}

\newcommand{\vq}{\vecsymb{q}}

\newcommand{\vu}{\vecsymb{u}}
\newcommand{\vv}{\vecsymb{v}}

\newcommand{\valpha}{\vecsymb{\alpha}}

\newcommand{\vI}{\vecsymb{I}}

%
%
\setlength{\paperheight}{29.7cm}
\setlength{\paperwidth}{21cm}
\setlength{\voffset}{0cm}
\setlength{\hoffset}{0cm}
\setlength{\textwidth}{16cm}
\setlength{\textheight}{22.5cm}
\setlength{\oddsidemargin}{0cm}
\setlength{\evensidemargin}{0cm}
\setlength{\topmargin}{0cm}
\setlength{\headheight}{14.5pt}
\setlength{\headsep}{0.5cm}
\setlength{\topskip}{0.5cm}
\setlength{\footskip}{1cm}
\setlength{\tabcolsep}{0.2cm}
\setlength{\parindent}{0pt}

\setcounter{topnumber}{2}
\setcounter{bottomnumber}{2}
\setcounter{totalnumber}{4}     
\setcounter{dbltopnumber}{2}    


\setlength{\parindent}{0pt}
\title{Anisotropic $H_{\Div}$-norm error estimates for rectangular $H_{\Div}$-elements}
\author{Sebastian Franz\footnote{
          Institute of Scientific Computing, Technische Universit\"at Dresden, Germany.\newline
          \mbox{e-mail}: sebastian.franz@tu-dresden.de}
       }
\date{\today}

\usepackage{fancyhdr} 
\fancyhf{} 
\fancyhead[R]{\today}
\fancyhead[L]{\texttt{\jobname}}
\fancyfoot[C]{\thepage}

\theoremstyle{plain}
\newtheorem{theorem}{Theorem}[section]
\newtheorem{lemma}[theorem]{Lemma}

\renewcommand{\vI}{\boldsymbol{\mathcal{I}}}

\begin{document}
  \pagestyle{fancy}
  \maketitle
  \begin{abstract}
    For the discretisation of $H_{\Div}$-functions on rectangular meshes there are at least three families of
    finite elements, namely Raviart-Thomas-, Brezzi-Douglas-Marini- and Arnold-Boffi-Falk-elements. 
    In order to prove convergence of a numerical method using them, sharp interpolation error estimates are important. 
    We provide them here in an anisotropic setting for the $H_{\Div}$-norm.
  \end{abstract}

  \textit{AMS subject classification (2010):} 65D05, 65N30\\

  \textit{Key words: $H_{\Div}$-elements, anisotropic interpolation error estimates, $H_{\Div}$-norm} \\

  \section{Introduction}
  The discretisation of $H_{\Div}$ using piecewise polynomials is known for a long time, see e.g. \cite{RT77}
  for one of the first papers, and
  they are used to discretise a variety of problems. For the analysis of these numerical methods we need
  estimates of the interpolation error. Especially on anisotropic meshes a finely tuned estimate, that incorporates the anisotropy, is important.
  In \cite{AADL11} anisotropic $L^p$-interpolation error estimates for Raviart-Thomas elements and the interpolation operator $\vI$
  on simplicial meshes
  \begin{gather}\label{eq:anisoL2_1}
    \norm{\vu-\vI\vu}{L^p(T)}\lesssim \sum_{|\valpha|=k+1}\vh^{\valpha}\norm{\pt_x^{\alpha_1}\pt_y^{\alpha_2}\vu}{L^p(K)}+h_T^{k+1}\norm{D^{k+1}\Div\vu}{L^p(T)}
  \end{gather}
  is presented, where ${\valpha}$ is a multiindex, $\vh$ are the lengths of an anisotropic simplex $T$, $h_T$ its diameter and $D^k$ denotes the sum of the absolute values of all the 
  derivatives of order $k$. 
  A similar result is given in \cite{AK20} for the Brezzi-Douglas-Marini element.
  They are not completely anisotropic due to the final term which for simplices and $\Div \vu\neq 0$ cannot be neglected.
  
  On rectangles and more general $d$-dimensional parallelotopes the situation is different. Here \cite{Stynes14} provides
  several anisotropic interpolation error estimates for the $L^p$-norm error based on Poincar\'e-inequalities.
  One of optimal order with minimal assumptions on the regularity of $\vu$ is 
  \begin{gather}\label{eq:anisoL2_2}
    \norm{\vu-\vI\vu}{L^p(K)}\lesssim \sum_{|\valpha|={k+1}}\vh^{\valpha}\norm{\pt_x^{\alpha_1}\pt_y^{\alpha_2}\vu}{L^p(K)}.
  \end{gather}
  Compared with estimate \eqref{eq:anisoL2_1} this estimate is completely anisotropic.

  In all these publications no anisotropic estimate for $\norm{\Div(\vu-\vI\vu)}{L^2(\Omega)}$ is given. We will present here such an estimate for rectangular
  elements in 2d. The generalisation to 3d and beyond is straight forward.
    
  \textbf{Notation:} We denote vector valued functions with a bold font. $L^p(D)$ with the norm $\norm{\cdot}{L^p(D)}$
  is the classical Lebesque space of function integrable to the power $p$ over a domain $D\subset\R^2$ and $W^{\ell,p}(D)$
  the corresponding Sobolev space of (weak) derivatives up to order $\ell$. Furthermore, we write $A\lesssim B$ if there exists 
  a generic constant $C>0$ such that $A\leq C\cdot B$.
  
  \section{Interpolation error estimates}
  Let us denote by $\hat K:=[0,1]^2$ the reference square, by $\QS_{p,q}(\hat K)$ the space of
  polynomials with degree $p$ and $q$ in the two dimensions over $\hat K$ and 
  $\QS_k(\hat K):=\QS_{k,k}(\hat K)$. Furthermore, let $\PS_k(\hat K)$ be the space of polynomials of total degree $k$ over $\hat K$.
  
  The basic ingredient of an interpolation error estimate for $\Div\vu$ is the commuting diagram property.
  Let $V_h$ be a discrete space over $\hat K$, $\hat\vI$ an interpolator into $V_h$ and $\hat\Pi$ the $L^2$-projection into $\Div V_h$.
  Then the commuting diagram property is, see \cite[Remark 2.5.2]{BBF13}
  \begin{gather}\label{eq:commdiag}
    \Div \hat\vI \vu=\hat\Pi\Div\vu.
  \end{gather}
  We will use this property first in an abstract way and apply it to the three families of 
  finite elements afterwards.
  
  Having \eqref{eq:commdiag} we obtain for $w:=\Div \vu$
  \[
    \norm{\Div(\vu-\hat\vI\vu)}{L^p(\hat K)}
      =\norm{\Div\vu-\hat\Pi\Div\vu}{L^p(\hat K)}
      =\norm{w-\hat\Pi w}{L^p(\hat K)},
  \]
  and the estimation becomes that of the $L^2$-projection into $\Div V_h$. A very useful tool in proving 
  anisotropic interpolation error estimates is the technique presented in \cite{Apel99}. For it to apply we only need 
  a set of functionals with some properties.
  \begin{lemma}\label{lem:functionals}
    There exist $d:=\dim(\Div V_h)$ functionals $F_{i}$ and an integer $\ell\in\N_0$, such that for all $p\in[1,\infty)$ and $1\leq i\leq d$
    \begin{subequations}
      \begin{align}
        &F_{i}\in (W^{\ell,p}(\hat K))',\label{eq:functional_1}\\
        &F_{i}(\hat\Pi w-w)=0\text{ for all }w\in C(\hat K),\label{eq:functional_2}\\
        &w\in \Div V_h\text{ and }F_{i}(w)=0\text{ for all } i\in\{1,\dots,d\}\text{ implies }w=0.\label{eq:functional_3}
      \end{align}
    \end{subequations}
  \end{lemma}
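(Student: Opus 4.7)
The plan is to construct the $F_i$ explicitly via the defining property of the $L^2$-projection, since $\hat\Pi$ is characterised by
\[
  \int_{\hat K} (\hat\Pi w - w) v \dx = 0 \qquad \text{for all } v \in \Div V_h.
\]
This duality suggests taking the functionals to be integration against a basis of $\Div V_h$, and everything required in the lemma is then a direct consequence.

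First I would fix a basis $\{v_1,\dots,v_d\}$ of the finite-dimensional space $\Div V_h$ (this is legitimate for all three element families, which is why the lemma is stated abstractly here — the concrete description of $\Div V_h$ for Raviart--Thomas, Brezzi--Douglas--Marini, and Arnold--Boffi--Falk elements is deferred to the subsequent sections). Then I would set
\[
  F_i(w) := \int_{\hat K} w \, v_i \dx, \qquad i = 1,\dots,d.
\]
Property \eqref{eq:functional_2} is then immediate from the orthogonality characterisation of the $L^2$-projection written above. Property \eqref{eq:functional_3} follows because, if $w \in \Div V_h$ satisfies $F_i(w)=0$ for every $i$, then $\int_{\hat K} w\, v \dx = 0$ for every $v \in \Div V_h$; choosing $v = w$ gives $\|w\|_{L^2(\hat K)} = 0$.

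For property \eqref{eq:functional_1}, since each $v_i$ is a polynomial on the bounded reference element $\hat K$, it lies in $L^{p'}(\hat K) \cap L^\infty(\hat K)$, and H\"older's inequality yields
\[
  |F_i(w)| \leq \|v_i\|_{L^{p'}(\hat K)} \|w\|_{L^p(\hat K)}
\]
for every $p \in [1,\infty)$ with conjugate exponent $p'$. Hence $F_i \in (L^p(\hat K))' = (W^{0,p}(\hat K))'$, so one can take $\ell = 0$. There is no real obstacle to overcome: the lemma is essentially a bookkeeping step recording that the natural functionals induced by the basis of $\Div V_h$ satisfy exactly the hypotheses required by the anisotropic interpolation framework of \cite{Apel99}, which will be invoked in the subsequent interpolation error proofs.
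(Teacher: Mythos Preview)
Your proof is correct and essentially identical to the paper's: both define $F_i$ as integration against a basis of $\Div V_h$, verify boundedness via H\"older, consistency via the definition of the $L^2$-projection, and unisolvence by testing against $\Div V_h$ itself. The only cosmetic differences are that you make $\ell=0$ explicit and you argue \eqref{eq:functional_3} by testing with $v=w$ rather than via $\hat\Pi w=0$.
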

  \begin{proof}
    Let us define for all $w\in W^{\ell,p}(\hat K)$ the linear functionals $F_{i}$ by
    \[
      F_{i}(w):=\int_{\hat K}w\cdot q_i,\quad i\in\{1,\dots,d\},
    \]
    where $\{q_i\}$ is a basis of $\Div V_h$. Note that these functionals can also be used in defining the $L^2$-projection $\hat\Pi$.
    Then it holds
    \[
      |F_{i}(w)|
        \leq \norm{q_i}{L^{q}(\hat K)}\norm{w}{L^p(\hat K)}
        \lesssim \norm{w}{W^{\ell,p}(\hat K)},
    \]
    where $\frac{1}{p}+\frac{1}{q}=1$. Thus we have \eqref{eq:functional_1}. 
    The consistency \eqref{eq:functional_2} follows directly by definition of the functionals and the $L^2$-projection.
    Finally, for $w\in \Div V_h$ we obtain
    \[
      F_{i}(w)=0,\,i\in\{1,\dots,d\}\quad 
      \lrarrow
      \hat\Pi w=0
      \lrarrow
      w=0,
    \]
    which is \eqref{eq:functional_3}.
  \end{proof}
  Following the technique of \cite{Apel99}, shown therein for Lagrange and Scott-Zhang interpolation,
  and using Lemma~\ref{lem:functionals}, we obtain the anisotropic interpolation error estimate for $\Div \vu$.
  In the case of $\PS_\ell(\hat K)\subset\Div V_h$  and assuming $\Div\vu\in W^{\ell+1,p}(\hat K)$
  it can be written as
  \begin{align}
    \norm{\Div(\vu-\hat\vI\vu)}{L^p(\hat K)}
    &=\norm{\Div\vu-\hat\Pi\Div\vu}{L^p(\hat K)}
    \lesssim \sum_{|\valpha|=\ell+1}\norm{\pt_x^{\alpha_1}\pt_y^{\alpha_2}\Div\vu}{L^p(\hat K)},\label{eq:est:1}
  \intertext{where ${\valpha}$ is a multiindex, and for $\QS_\ell(\hat K)\subset \Div V_h$ we have the sharper estimate}
    \norm{\Div(\vu-\hat\vI\vu)}{L^p(\hat K)}
    &\lesssim \norm{\pt_x^{\ell+1}\Div\vu}{L^p(\hat K)}+\norm{\pt_y^{\ell+1}\Div\vu}{L^p(\hat K)}.\label{eq:est:2}
  \end{align}
  
  \subsection{Raviart-Thomas elements}
  The Raviart-Thomas space over $\hat K$ is given by, see \cite{BBF13},
  \[
    RT_k(\hat K):= \QS_{k+1,k}(\hat K)\times\QS_{k,k+1}(\hat K).
  \]
  Note, that it holds
  \[
    (\QS_{k}(\hat K))^2\subset RT_k(\hat K)\subset (\QS_{k+1}(\hat K))^2
    \qmbox{and}
    \Div RT_{k}(\hat K)=\QS_k(\hat K).
  \]  
  The interpolation operator $\hat\vI_{RT}: (C(\hat K))^2\to RT_k(\hat K)$ is given for any $\vv\in (C(\hat K))^2$ by
  \begin{subequations}\label{eq:interpolation}
  \begin{align}
    \int_{\hat F}(\hat\vI_{RT}\vv-\vv)\cdot \vn\cdot q&=0,\quad\forall q\in \PS_k(\hat F),\forall \hat F\subset\partial\hat K,\\
    \int_{\hat K}(\hat\vI_{RT}\vv-\vv)\cdot\vq&=0,\quad\forall \vq\in \QS_{k-1,k}(\hat K)\times\QS_{k,k-1}(\hat K),
  \end{align}
  \end{subequations}
  where $\PS_k(\hat F)$ is the space of polynomials of total degree $k$ on a face $\hat F$ of $\hat K$. 
  
  The commuting diagram property \eqref{eq:commdiag} can be shown with integrations by parts
  and using the properties of the interpolation operator, see also \cite[Proposition 2.5.2]{BBF13}.
  Thus we have everything needed to apply the general result \eqref{eq:est:2}.
  \begin{theorem}
    For any $1\leq\ell\leq k+1$, $p\in[1,\infty)$ and $\vu\in (L^1(\hat K))^2$, such that $\Div \vu\in W^{\ell,p}(\hat K)$ 
    it holds
    \[
      \norm{\Div(\vu-\hat\vI_{RT}\vu)}{L^p(\hat K)}
      \lesssim \norm{\pt_x^{\ell}\Div\vu}{L^p(\hat K)}+\norm{\pt_y^{\ell}\Div\vu}{L^p(\hat K)}.
    \]
  \end{theorem}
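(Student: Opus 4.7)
The plan is to invoke the abstract estimate \eqref{eq:est:2} directly, using two structural facts specific to the Raviart--Thomas element: the identification $\Div RT_k(\hat K) = \QS_k(\hat K)$, and the commuting diagram property $\Div \hat\vI_{RT}\vu = \hat\Pi\Div\vu$, where $\hat\Pi$ denotes the $L^2$-projection onto $\QS_k(\hat K)$. Together with Lemma~\ref{lem:functionals}, applied to $V_h = RT_k(\hat K)$ with a monomial basis of $\QS_k(\hat K)$ supplying the functionals, these ingredients deliver the anisotropic bound \eqref{eq:est:2} without further work.

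First I would verify the commuting diagram. One tests $\Div(\hat\vI_{RT}\vu - \vu)$ against an arbitrary $q \in \QS_k(\hat K)$ and integrates by parts. Because $\grad q$ lies in $\QS_{k-1,k}(\hat K) \times \QS_{k,k-1}(\hat K)$ it is admissible as a test function in the volume condition of \eqref{eq:interpolation}, while the restriction of $q\,\vn$ to any face $\hat F$ belongs to $\PS_k(\hat F)$ and is therefore annihilated by the face conditions of \eqref{eq:interpolation}. Hence $\int_{\hat K} q\,\Div(\hat\vI_{RT}\vu - \vu) = 0$ for every $q \in \QS_k(\hat K)$, which, since $\Div\hat\vI_{RT}\vu \in \QS_k(\hat K)$, is precisely \eqref{eq:commdiag}.

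Then I would rewrite $\norm{\Div(\vu - \hat\vI_{RT}\vu)}{L^p(\hat K)} = \norm{\Div\vu - \hat\Pi\Div\vu}{L^p(\hat K)}$ and apply \eqref{eq:est:2} with parameter $\ell-1$ in place of $\ell$; this is legitimate because $\QS_{\ell-1}(\hat K) \subset \QS_k(\hat K) = \Div RT_k(\hat K)$ for all $1 \leq \ell \leq k+1$. The outcome is exactly the stated bound.

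The real technical content is hidden inside \eqref{eq:est:2}: the Apel-style argument assembles a polynomial-preserving quasi-interpolant from the functionals of Lemma~\ref{lem:functionals} and appeals to anisotropic Deny--Lions/Poincar\'e inequalities on $\hat K$ to control the orthogonal complement of $\QS_{\ell-1}(\hat K)$ by one-directional derivatives of order $\ell$ only. I therefore expect the RT-specific part of the proof to amount to little more than the commuting-diagram verification together with the identification $\Div RT_k(\hat K) = \QS_k(\hat K)$, with no further calculation needed.
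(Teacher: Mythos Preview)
Your proposal is correct and follows essentially the same route as the paper: verify the commuting diagram property \eqref{eq:commdiag} for $\hat\vI_{RT}$ via integration by parts and the defining conditions \eqref{eq:interpolation}, then invoke the abstract estimate \eqref{eq:est:2} (with $\ell$ there replaced by $\ell-1$) using $\Div RT_k(\hat K)=\QS_k(\hat K)$. The paper's own argument is in fact even more terse, simply citing that the commuting diagram holds and stating that \eqref{eq:est:2} applies.
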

  
  \subsection{Brezzi-Douglas-Marini elements}
  The Brezzi-Douglas-Marini space over $\hat K$ is given by, see \cite{BDM85},
  \[
    BDM_k(\hat K):= (\PS_k(\hat K))^2\oplus\text{span}\{\Curl x^{k+1}y ,\Curl xy^{k+1}\}
  \]
  and it holds
  \[
    (\PS_{k}(\hat K))^2\subset BDM_k(\hat K)
    \qmbox{and}
    \Div BDM_k(\hat K)=\PS_{k-1}(\hat K).
  \]
  The interpolation operator $\hat\vI_{BDM}: (C(\hat K))^2\to BDM_k(\hat K)$ is given for any $\vv\in (C(\hat K))^2$ by
  \begin{subequations}\label{eq:interpolationBDM}
  \begin{align}
    \int_{\hat F}(\hat\vI_{BDM}\vv-\vv)\cdot \vn\cdot q&=0,\quad\forall q\in \PS_{k}(\hat F),\forall \hat F\subset\partial\hat K,\\
    \int_{\hat K}(\hat\vI_{BDM}\vv-\vv)\cdot\vq&=0,\quad\forall \vq\in (\PS_{k-2}(\hat K))^2,
  \end{align}
  \end{subequations}
  and again the commuting diagram property \eqref{eq:commdiag} can be shown with integrations by parts, see also \cite[Proposition 2.5.2]{BBF13}.
  We therefore obtain with \eqref{eq:est:1} the following theorem.
  \begin{theorem}
    For any $1\leq\ell\leq k$, $p\in[1,\infty)$ and $\vu\in (L^1(\hat K))^2$, such that $\Div \vu\in W^{\ell,p}(\hat K)$ 
    it holds
    \[
      \norm{\Div(\vu-\hat\vI_{BDM}\vu)}{L^p(\hat K)}
      \lesssim \sum_{|\valpha|=\ell}\norm{\pt_x^{\alpha_1}\pt_y^{\alpha_2}\Div\vu}{L^p(\hat K)},
    \]
    where ${\valpha}$ is a multiindex of degree $\ell$.
  \end{theorem}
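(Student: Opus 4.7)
The approach is to reduce the statement to the abstract estimate \eqref{eq:est:1}, exactly as was done in the Raviart-Thomas case. Two ingredients have to be verified: the commuting diagram property for $\hat\vI_{BDM}$ and the correct polynomial inclusion in $\Div BDM_k(\hat K)$.

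My first step is to check that the commuting diagram property \eqref{eq:commdiag} holds for the BDM interpolator, i.e.\ $\Div \hat\vI_{BDM}\vu = \hat\Pi \Div\vu$, where $\hat\Pi$ now denotes the $L^2$-projection onto $\Div BDM_k(\hat K) = \PS_{k-1}(\hat K)$. This follows from a short integration by parts: testing $\Div(\hat\vI_{BDM}\vu-\vu)$ against an arbitrary $q \in \PS_{k-1}(\hat K)$ produces a volume term involving $\grad q \in (\PS_{k-2}(\hat K))^2$ and a boundary term involving normal traces against $q|_{\hat F} \in \PS_{k-1}(\hat F) \subset \PS_k(\hat F)$, both of which vanish by the defining moments \eqref{eq:interpolationBDM}. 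Since this is the content of \cite[Prop.~2.5.2]{BBF13}, it can simply be cited.

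My second step is to apply \eqref{eq:est:1}. Because $\Div BDM_k(\hat K) = \PS_{k-1}(\hat K)$, the inclusion $\PS_{\ell-1}(\hat K) \subset \Div BDM_k(\hat K)$ is available for every $1 \leq \ell \leq k$. Using \eqref{eq:est:1} with its parameter shifted to $\ell-1$ then gives the stated bound with the sum running over multiindices of degree $\ell$, consistent with the assumed regularity $\Div \vu \in W^{\ell,p}(\hat K)$.

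I do not expect any genuine obstacle here: the whole machinery (Lemma~\ref{lem:functionals}, the \cite{Apel99}-type anisotropic argument, and the abstract estimates \eqref{eq:est:1}--\eqref{eq:est:2}) has been arranged so that both the $RT$ and $BDM$ theorems follow as corollaries. The only point requiring care is the index shift: unlike the Raviart-Thomas case, where $\Div RT_k = \QS_k$ allows the sharper tensor-product estimate \eqref{eq:est:2}, here $\Div BDM_k$ only contains $\PS_{k-1}$, which forces the use of the weaker full-multiindex estimate \eqref{eq:est:1} and produces the range $1 \leq \ell \leq k$ rather than $1 \leq \ell \leq k+1$.
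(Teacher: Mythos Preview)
Your proposal is correct and follows exactly the paper's approach: the paper simply notes that the commuting diagram property \eqref{eq:commdiag} follows by integration by parts (citing \cite[Proposition~2.5.2]{BBF13}) and then invokes the abstract estimate \eqref{eq:est:1}, precisely as you describe. Your additional remarks on the index shift and on why \eqref{eq:est:2} is unavailable here are accurate and make explicit what the paper leaves implicit.
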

  
  \subsection{Arnold-Boffi-Falk elements}
  The Arnold-Boffi-Falk space over $\hat K$ is given by, see \cite{ABF05}, 
  \[
    ABF_k(\hat K):= \QS_{k+2,k}(\hat K)\times\QS_{k,k+2}(\hat K)
  \]
  and it holds
  \[
    (\QS_{k}(\hat K))^2\subset ABF_k(\hat K)
    \qmbox{and}
    \Div ABF_k(\hat K)=\QS_{k+1}(\hat K)\setminus\text{span}\{x^{k+1}y^{k+1}\}.
  \]
  The interpolation operator $\hat\vI_{ABF}: (C(\hat K))^2\to ABF_k(\hat K)$ is given for any $\vv\in (C(\hat K))^2$ by
  \begin{subequations}\label{eq:interpolationABF}
  \begin{align}
    \int_{\hat F}(\hat\vI_{ABF}\vv-\vv)\cdot \vn\cdot q&=0,\quad\forall q\in \PS_{k}(\hat F),\forall \hat F\subset\partial\hat K,\\
    \int_{\hat K}(\hat\vI_{ABF}\vv-\vv)\cdot\vq&=0,\quad\forall \vq\in \QS_{k-1,k}(\hat K)\times\QS_{k,k-1}(\hat K),\\
    \int_{\hat K}\Div(\hat\vI_{ABF}\vv-\vv)\cdot x^iy^{k+1}&=0,\quad\forall i\in\{0,\dots,k\},\label{eq:ABF:c}\\
    \int_{\hat K}\Div(\hat\vI_{ABF}\vv-\vv)\cdot x^{k+1}y^j&=0,\quad\forall j\in\{0,\dots,k\},\label{eq:ABF:d}
  \end{align}
  \end{subequations}
  and again the commuting diagram property \eqref{eq:commdiag} can be shown with integrations by parts and 
  a direct application of \eqref{eq:ABF:c} and \eqref{eq:ABF:d}.
  Note that due to
  \[
    \PS_{k+1}(\hat K)\subset \Div ABF_k(\hat K)\qmbox{and}
    \QS_{k}(\hat K)\subset \Div ABF_k(\hat K)
  \]
  we can use both \eqref{eq:est:1} and \eqref{eq:est:2} to estimate the interpolation error.
  \begin{theorem}
    For any $1\leq\ell\leq k+2$, $p\in[1,\infty)$ and $\vu\in (L^1(\hat K))^2$, such that $\Div \vu\in W^{\ell,p}(\hat K)$ 
    it holds
    \begin{align*}
      \norm{\Div(\vu-\hat\vI_{ABF}\vu)}{L^p(\hat K)}
      &\lesssim \sum_{|\valpha|=\ell}\norm{\pt_x^{\alpha_1}\pt_y^{\alpha_2}\Div\vu}{L^p(\hat K)},
    \intertext{and for $1\leq s\leq k+1$}
      \norm{\Div(\vu-\hat\vI_{ABF}\vu)}{L^p(\hat K)}
      &\lesssim \norm{\pt_x^{s}\Div\vu}{L^p(\hat K)}+\norm{\pt_y^{s}\Div\vu}{L^p(\hat K)}.
    \end{align*}
  \end{theorem}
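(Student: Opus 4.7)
The plan is to reduce the theorem to the abstract estimates \eqref{eq:est:1} and \eqref{eq:est:2} that were already derived from Lemma~\ref{lem:functionals}. Those estimates require only two ingredients: the commuting diagram property \eqref{eq:commdiag}, and suitable polynomial spaces contained in $\Div V_h$. The first ingredient is already available for $\hat\vI_{ABF}$ (this was pointed out just above the theorem, and is exactly what conditions \eqref{eq:ABF:c} and \eqref{eq:ABF:d} are designed to provide on top of integration by parts). The second ingredient follows immediately from the characterisation $\Div ABF_k(\hat K)=\QS_{k+1}(\hat K)\setminus\text{span}\{x^{k+1}y^{k+1}\}$.

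First I would record the two relevant polynomial inclusions. Since the excluded monomial $x^{k+1}y^{k+1}$ has total degree $2k+2>k+1$, it does not belong to $\PS_{k+1}(\hat K)$, so $\PS_{k+1}(\hat K)\subset \Div ABF_k(\hat K)$. Since it also has component-wise degree $k+1>k$, it does not belong to $\QS_k(\hat K)$, so $\QS_k(\hat K)\subset \Div ABF_k(\hat K)$. By monotonicity this gives $\PS_{\ell-1}(\hat K)\subset \Div ABF_k(\hat K)$ for all $1\le\ell\le k+2$ and $\QS_{s-1}(\hat K)\subset \Div ABF_k(\hat K)$ for all $1\le s\le k+1$.

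The remaining step is just to invoke the abstract estimates with these containments. For the first theorem inequality I would apply \eqref{eq:est:1} with the index $\ell$ there replaced by $\ell-1$ for each $\ell\in\{1,\dots,k+2\}$, which under $\Div\vu\in W^{\ell,p}(\hat K)$ yields the claimed bound with the full multiindex sum of order $\ell$. For the second inequality I would apply \eqref{eq:est:2} with its $\ell$ replaced by $s-1$ for each $s\in\{1,\dots,k+1\}$, producing the sharper tensor-product estimate with only the two pure partial derivatives of order $s$.

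I do not expect a serious obstacle; the whole content of the proof sits in the asymmetry of $\Div ABF_k$, which is strictly smaller than $\QS_{k+1}$ but still contains all of $\PS_{k+1}$. That is why one gains one full order in the $\PS$-type bound over the $\QS$-type bound, and it is precisely what the enrichment of the interpolation conditions \eqref{eq:ABF:c}--\eqref{eq:ABF:d} is designed to deliver. Once the two polynomial containments are checked and the commuting diagram property is accepted, the estimates follow by a direct, essentially mechanical application of \eqref{eq:est:1} and \eqref{eq:est:2}.
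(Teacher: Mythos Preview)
Your proposal is correct and follows essentially the same route as the paper: establish the commuting diagram property for $\hat\vI_{ABF}$, verify the two polynomial inclusions $\PS_{k+1}(\hat K)\subset\Div ABF_k(\hat K)$ and $\QS_k(\hat K)\subset\Div ABF_k(\hat K)$, and then apply the abstract estimates \eqref{eq:est:1} and \eqref{eq:est:2}. You spell out the index shift and the reason the excluded monomial $x^{k+1}y^{k+1}$ avoids both $\PS_{k+1}$ and $\QS_k$ a bit more explicitly than the paper does, but the argument is the same.
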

  
  \section{Conclusions}
    Let us transform the estimates shown on the reference element back to an axi-parallel 
    rectangle $K$ of dimensions $h_x$ and $h_y$.
    Then, always using the highest possible derivatives, we obtain for $p\in[1,\infty)$
    \begin{align*}
      \norm{\Div(\vu-\vI_{RT}\vu)}{L^p(K)}
        &\lesssim h_x^{k+1}\norm{\pt_x^{k+1}\Div \vu}{L^p(K)}+h_y^{k+1}\norm{\pt_y^{k+1}\Div \vu}{L^p(K)},\\
      \norm{\Div(\vu-\vI_{BDM}\vu)}{L^p(K)}
        &\lesssim \sum_{|\valpha|=k}h_x^{\alpha_1}h_y^{\alpha_2}\norm{\pt_x^{\alpha_1}\pt_y^{\alpha_2}\Div \vu}{L^p(K)},\\
      \norm{\Div(\vu-\vI_{ABF}\vu)}{L^p(K)}
        &\lesssim \sum_{|\valpha|=k+2}h_x^{\alpha_1}h_y^{\alpha_2}\norm{\pt_x^{\alpha_1}\pt_y^{\alpha_2}\Div \vu}{L^p(K)},\\
      \norm{\Div(\vu-\vI_{ABF}\vu)}{L^p(K)}
        &\lesssim h_x^{k+1}\norm{\pt_x^{k+1}\Div \vu}{L^p(K)}+h_y^{k+1}\norm{\pt_y^{k+1}\Div \vu}{L^p(K)}.
    \end{align*}
    Thus for all elements we obtain anisotropic interpolation error estimates in $H_{\Div}$.
    Compared to Raviart-Thomas elements we see a reduction of one order for Brezzi-Douglas-Marini elements
    and an increase of one order for Arnold-Boffi-Falk elements, if all derivatives of $\Div \vu$ were used.
    For the later element we have also the same estimate as for Raviart-Thomas elements.
    
    For an affine transformation from $\hat K$ to a quadrilateral $K$ similar results follow. It is an open question, whether for non-affine
    transformations anisotropic interpolation error estimates can be shown.
    
    Using as functionals $F_i$ those conditions from the definition of the interpolation operator $\vI$,
    we can also show anisotropic interpolation error estimates in $L^p$, following \cite[Lemma 2.13]{Apel99}
    and improve upon \eqref{eq:anisoL2_2}
    \begin{align*}
      \norm{\vu-\vI_{RT}\vu}{L^p(K)}+\norm{\vu-\vI_{ABF}\vu}{L^p(K)}
        &\lesssim h_x^{k+1}\norm{\pt_x^{k+1}\vu}{L^p(K)}+h_y^{k+1}\norm{\pt_y^{k+1}\vu}{L^p(K)},\\
      \norm{\vu-\vI_{BDM}\vu}{L^p(K)}
        &\lesssim \sum_{|\valpha|=k+1}h_x^{\alpha_1}h_y^{\alpha_2}\norm{\pt_x^{\alpha_1}\pt_y^{\alpha_2}\vu}{L^p(K)}.
    \end{align*}

  \section*{Acknowledgements}
  The author would like to thank Thomas Apel and Gunar Matthies for valuable discussions
  concerning the interpolation of $H_{\Div}$-elements.
  \bibliographystyle{plain}
  \bibliography{lit}

\end{document}